\documentclass[10pt,a4paper,oneside,final]{amsart}
\usepackage[T1]{fontenc}
\usepackage[utf8]{inputenc}
\usepackage{lmodern}
\usepackage[final]{microtype}
\usepackage[backend=biber,giveninits=true]{biblatex}
\usepackage{amssymb,mathtools,amsthm,tikz-cd}
\usepackage{hyperref,xurl}
\hypersetup{breaklinks=true,hidelinks,
  pdftitle={Uniqueness of differential Poincare duality models},
  pdfauthor={Pavel H\'ajek}}

\theoremstyle{plain}
\newtheorem{theorem}{Theorem}[section]
\newtheorem{proposition}[theorem]{Proposition}
\newtheorem{lemma}[theorem]{Lemma}
\newtheorem{corollary}[theorem]{Corollary}
\theoremstyle{remark}
\newtheorem{remark}[theorem]{Remark}
\newtheorem{example}[theorem]{Example}

\newcommand{\wh}[1]{\hat{#1}}
\newcommand{\wt}[1]{\tilde{#1}}
\newcommand{\dprime}{{\prime\prime}}
\newcommand{\N}{\mathbb N}
\newcommand{\K}{\mathbb K}
\newcommand{\Dd}{\mathrm d}
\newcommand{\Hh}{\mathrm H}
\newcommand{\HH}{\mathcal H}
\newcommand{\QQ}{\mathcal Q}
\newcommand{\Or}{\varepsilon}

\newcommand{\la}{\langle}
\newcommand{\ra}{\rangle}
\DeclareMathOperator{\im}{im}
\DeclareMathOperator{\Span}{span}
\newcommand{\Id}{{{\mathchoice{\rm 1\mskip-4mu l}{\rm 1\mskip-4mu l}{\rm 1\mskip-4.5mu l}{\rm 1\mskip-5mu l}}}}
\newcommand{\into}{\hookrightarrow}
\newcommand{\onto}{\twoheadrightarrow}

\begin{document}
\title[Uniqueness of differential Poincar\'e duality models]{Uniqueness of differential
Poincar\'e duality models}
\author{Pavel H\'ajek}
\address{University of Hamburg, Bundesstr.~55, 20146 Hamburg, Germany}
\date{}
\keywords{Hodge decomposition, Poincar\'e duality model}
\begin{abstract}
We prove the remaining even-dimensional case of the Lambrechts--Stanley
``uniqueness'' conjecture for 1-connected differential Poincar\'e duality
models. Together with our previous
odd-dimensional result, this shows that any two such algebras weakly
homotopy equivalent as PDGAs admit direct PDGA quasi-isomorphisms into a common
connected differential Poincar\'e duality algebra. We use our previous
method of obtaining differential Poincar\'e duality models as
nondegenerate quotients of Hodge extensions, the new ingredient being a Hodge extension in the middle degree.
\end{abstract}
\maketitle
\tableofcontents

\section{Introduction}\label{Sec:Introduction}

The uniqueness problem for differential Poincar\'e duality models asks
whether two weakly equivalent models admit quasi-isomorphisms into a
common connected model. Lambrechts \& Stanley \cite{Lambrechts2007} proved this under
additional connectivity assumptions and conjectured that these can be
removed~\cite[Theorem~7.1 and the subsequent conjecture]{Lambrechts2007}.
In~\cite[Propositions~5.5 and~5.6]{HodgePaper} we proved uniqueness for
1-connected algebras assuming either $\Hh^2(V)=0$ or that $n$ is odd.
Here we establish the remaining even-dimensional case, with no
assumption on $\Hh^2(V)$.

An \emph{oriented Poincar\'e DGA} (\emph{oriented PDGA})
$(V,\Dd,\Or)$ of degree $n\in\N_0$ over a field~$\K$ is a
non-negatively graded unital commutative differential graded algebra
(CDGA) $(V,\Dd)$ with a linear map $\Or\colon V\to\K$ satisfying
$\Or\circ\Dd=0$,
$\Or|_{V^i}=0$ for $i\ne n$, and $\Or|_{\ker\Dd}\ne0$, such that
the pairing
\[
                    \la v_1,v_2\ra\coloneqq\Or(v_1v_2)
\]
induces a perfect pairing on cohomology~$\Hh(V)$.
It is a \emph{differential Poincar\'e duality algebra}
(\emph{dPD algebra}) if this pairing is perfect on chain level as well.
A \emph{PDGA} specifies only the induced cohomology orientation
$\Or_*([v])=\Or(v)$; an oriented PDGA also specifies its chain-level
lift~$\Or$.
Such an algebra is \emph{connected} if $V^0=\K$ and
\emph{1-connected} if additionally $V^1=0$. It is of \emph{finite type}
if $\dim_\K V^k<\infty$ for every~$k$.

A PDGA morphism is a DGA morphism preserving
cohomology orientation. Two PDGAs are \emph{weakly homotopy equivalent}
if they can be connected by a zig-zag of PDGA quasi-isomorphisms.
For the uniqueness theorem and its corollary, assume
$\operatorname{char}\K=0$. We discuss the validity of the method in
positive characteristic other than~$2$ in Remark~\ref{Rem:Characteristic}.

\begin{theorem}\label{Thm:Uniqueness}
Let $(V,\Dd,\Or)$ and $(V^\prime,\Dd^\prime,\Or^\prime)$ be
1-connected dPD algebras of degree $n\in\N_0$ that are weakly homotopy
equivalent as PDGAs. Then there is a connected dPD algebra
$(V^\dprime,\Dd^\dprime,\Or^\dprime)$ and PDGA quasi-isomorphisms
\[
\begin{tikzcd}[row sep=small,column sep=small]
& V^\dprime &\\
V\arrow[hook]{ur}{\iota} && V^\prime\arrow[hook',swap]{ul}{\iota^\prime},
\end{tikzcd}
\]
which are injective and preserve chain-level orientation.
\end{theorem}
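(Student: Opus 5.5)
The plan is to follow the strategy of \cite{HodgePaper}: build a single CDGA receiving both $V$ and $V^\prime$, then pass to a nondegenerate quotient.

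\textbf{Step 1: a common target.} Since $V$ and $V^\prime$ are weakly homotopy equivalent and $V$ is 1-connected, we pass to a minimal Sullivan model $M$ of $V$. It is also a model of $V^\prime$, and the zig-zag is compatible with cohomology orientations. Lifting, we obtain quasi-isomorphisms $M\to V$ and $M\to V^\prime$ inducing the same cohomology orientation. Next we form the pushout $V\otimes_M V^\prime$; more precisely, we replace $M\to V$ by a relative Sullivan extension so that the pushout is homotopically sound. This gives quasi-isomorphisms $V\to W\leftarrow V^\prime$, injective on chain level, into a connected CDGA $W$ of finite type in low degrees. On $W$ we choose a chain-level orientation $\Or_W$ with $\Or_W\circ\Dd=0$ restricting to $\Or$ and $\Or^\prime$. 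Such an $\Or_W$ exists because $\Or$ and $\Or^\prime$ agree on cohomology, so their difference on $V^n+V^{\prime n}$ vanishes on closed elements and can be corrected by exact terms. This step needs a short extension argument.

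\textbf{Step 2: Hodge extension and quotient.} Following \cite{HodgePaper}, we enlarge $W$ to a Hodge extension $\wh W$, obtained by adjoining generators so that the degenerate part of $\la\cdot,\cdot\ra_{\Or_W}$ becomes a dg ideal $I$ that is acyclic. The quotient $V^\dprime=\wh W/I$ is then a dPD algebra quasi-isomorphic to $W$. Injectivity of $V\to V^\dprime$ follows because $V$ is itself nondegenerate: if $v\in V$ lies in $I$, it pairs to zero with all of $V$, so $v=0$. The same argument applies to $V^\prime$. Chain-level orientation is preserved by construction.

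\textbf{Step 3: the main obstacle, the middle degree for even $n$.} The degenerate kernel in degree $n/2$ pairs with itself, so the symmetric self-pairing on it cannot be killed degree-by-degree using the odd-$n$ argument. Here the middle-degree Hodge extension enters. I would split $W^{n/2}$ into harmonic, exact and coexact parts with respect to the form. The exact part already pairs trivially with itself modulo $\Or_W\circ\Dd=0$. I would then adjoin free generators in degree $n/2$ together with their differentials, and use them to modify coexact representatives so that the form on the coexact part becomes hyperbolic against the exact part. This is the step where $\operatorname{char}\K\neq2$ is needed, since a symmetric form must be diagonalised or split. The remaining point is to check that the adjoined generators do not create new cohomology and that $I$ remains a dg ideal. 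Both checks reduce to showing that $\Dd$ maps the new coexact generators isomorphically onto the chosen exact complement. I expect this to be the delicate part, and I would verify it first in the case $\Hh^2=0$, as a sanity check against \cite[Proposition~5.5]{HodgePaper}.
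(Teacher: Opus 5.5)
\textbf{Gap in Step~3.} This step contains the new content of the even case, and as sketched it does not work. Your construction uses nothing about $W$ except that it is a connected oriented PDGA of finite type. In that generality the conclusion is false: Example~\ref{Ex:Connected} gives, over any field of characteristic $\ne2$ in which $-1$ is not a square (e.g.\ $\mathbb Q$), connected finite-type oriented PDGAs in every even degree with no connected finite-type quasi-isomorphic extension of Hodge type.

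The obstruction comes from degree-one elements. Your $W$ has them as soon as $\Hh^2(V)\ne0$, since $W^1=s(U^2)$ and $U^2\cong\Hh^2(V)$. So your proposed sanity check $\Hh^2=0$ is exactly the case $W^1=0$, already settled in \cite[Proposition~5.5]{HodgePaper}.

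Here is where it breaks. What must be made isotropic is the coexact space $C^m$; the degenerate kernel pairs trivially with everything. To twist $C^m$ you need new \emph{exact} elements in degree $m$, i.e.\ generators $x_\alpha$ of degree $m-1$ (not $n/2$) with $\la\Dd x_\alpha,c_\beta\ra=\delta_{\alpha\beta}$. Then $\Or\circ\Dd=0$ forces $\wh\Or(x_\alpha\,\Dd c)=(-1)^m\la\Dd x_\alpha,c\ra$. Hence the new non-closed elements $x_\alpha v$ with $v\in W^1$ satisfy $\la x_\alpha v,h\ra=\wh\Or(x_\alpha\cdot vh)\ne0$ for some $\alpha$ whenever $vh=\Dd c$ with $h\in\HH^m$ and $0\ne c\in C^m$. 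Correcting $x_\alpha v$ by harmonic elements keeps it perpendicular to the new exact elements. It can, however, leave a self-pairing that these cannot absorb. This is exactly what happens in Example~\ref{Ex:Connected}, where $ah=\Dd c$.

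\textbf{The missing idea.} You need the adaptedness condition of Proposition~\ref{Prop:Extension}: a pre-Hodge decomposition with $W^1\HH^m\subset C^{m+1}$. Orient the new monomials through the $\Dd C$- and $C$-components of the $V$-factor, and set the orientation to zero on monomials with two new generators. Then adaptedness gives $Z=X\cdot W^1\perp\HH^m$, where $X=\Span_\K\{x_\alpha\}$. The relations $Z\perp Z$ and $Z\perp\Dd X$ hold automatically. The explicit correction $\mu(c+z)=-\sum_\alpha\la\frac12c+z,c_\alpha\ra\Dd x_\alpha$ then solves the middle-degree Hodge-twist condition; the factor $\frac12$ is where $\operatorname{char}\K\ne2$ enters.

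Verifying adaptedness (Lemma~\ref{Lem:Adapted}) uses the specific shape of $W$. Take $\HH\subset V\otimes1\otimes1$. Then $Z\perp\HH$ because $\HH^m\HH^{m-1}\subset V^{n-1}=0$, by 1-connectedness and chain-level duality. Moreover, $\Dd_W$ is injective on $Z$: for $u_\alpha\in U^2$ one has $\Dd(su_\alpha)=\rho(u_\alpha)\otimes1\otimes1-1\otimes\rho^\prime(u_\alpha)\otimes1$, and the $\rho^\prime(u_\alpha)$ are linearly independent.

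\textbf{Step~1.} The argument above needs the concrete two-sided model $W=V\otimes V^\prime\otimes\Lambda(sU)$, obtained by base-changing the relative Sullivan model of multiplication $M\otimes M\to M$ along $\rho\otimes\rho^\prime$. Your Step~1 as written does not produce it. Pushing out $M\into\wt M\xrightarrow{\simeq}V$ along $M\to V^\prime$ gives an algebra that receives $\wt M$ and $V^\prime$, not $V$.

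By contrast, the checks you flagged as delicate are routine. The adjoined pairs are acyclic (K\"unneth). Once the middle degree is Hodge, \cite[Proposition~4.13 and Lemma~3.15]{HodgePaper} finish the argument.
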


\begin{corollary}\label{Cor:Unoriented}
Suppose that the underlying DGAs of two 1-connected dPD algebras are
connected by a zig-zag of DGA quasi-isomorphisms. Then both admit
injective DGA quasi-isomorphisms into a common connected dPD algebra.
\end{corollary}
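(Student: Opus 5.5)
We have to prove Corollary~\ref{Cor:Unoriented}: the orientations are not matched by the zig-zag. The plan is to reduce to Theorem~\ref{Thm:Uniqueness} by rescaling an orientation.

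\emph{Setup.} Let $(V,\Dd,\Or)$ and $(V',\Dd',\Or')$ be the two 1-connected dPD algebras. A zig-zag of DGA quasi-isomorphisms induces an isomorphism of graded algebras $\phi\colon \Hh(V)\to\Hh(V')$. Write the degrees as $n$ and $n'$. Poincar\'e duality shows that the top nonzero cohomology is one-dimensional and sits in degree $n$, respectively $n'$: the unit $1\in\Hh^0$ pairs nondegenerately only with $\Hh^n$, and $\Hh^k=0$ for $k>n$ by perfectness of the pairing. Since $\phi$ preserves degrees, $n=n'$.

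\emph{Rescaling.} Both $\Or_*$ and $\Or'_*\circ\phi$ are nonzero linear forms on the line $\Hh^n(V)$. Hence there is $\lambda\in\K^\times$ with $\Or'_*\circ\phi=\lambda\,\Or_*$.

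Replace $\Or$ by $\lambda\Or$. The triple $(V,\Dd,\lambda\Or)$ is still a dPD algebra, since all three required properties survive multiplication by a nonzero scalar:
\begin{itemize}
\item $\lambda\Or\circ\Dd=0$;
\item $\lambda\Or$ is still supported in degree $n$ and nonzero on $\ker\Dd$;
\item the chain-level pairing is rescaled by $\lambda$, so it stays perfect.
\end{itemize}

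\emph{Making the zig-zag orientation-preserving.} Each intermediate DGA in the zig-zag may not be a PDGA a priori. However, it has cohomology isomorphic to $\Hh(V)$ through the maps of the zig-zag. So we equip it with the cohomology orientation transported from $(V,\lambda\Or_*)$ along those isomorphisms. The intermediate algebra then has Poincar\'e duality on cohomology, because it is isomorphic as a graded algebra to $\Hh(V)$.

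With these choices every map in the zig-zag preserves cohomology orientation by construction. The only point to check is at the far end: the composite isomorphism $\Hh(V)\to\Hh(V')$ must carry $\lambda\Or_*$ to $\Or'_*$. This holds precisely because of the choice of $\lambda$. Thus $(V,\lambda\Or)$ and $(V',\Or')$ are weakly homotopy equivalent as PDGAs.

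\emph{Conclusion.} Theorem~\ref{Thm:Uniqueness} now gives a connected dPD algebra $V''$ and injective PDGA quasi-isomorphisms $V\into V''$ and $V'\into V''$. Forgetting orientations, these are injective DGA quasi-isomorphisms into a common connected dPD algebra, as required.

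The only mildly delicate step is transporting the orientations to the intermediate objects. This is harmless because the notion of PDGA only requires a cohomology orientation with perfect cohomological pairing, and that property is invariant under graded algebra isomorphism.
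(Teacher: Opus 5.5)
Your proposal is correct and follows the paper's proof: rescale $\Or$ by the nonzero scalar that makes the zig-zag's cohomology isomorphism orientation-preserving, transport the orientation to the intermediate algebras, and apply Theorem~\ref{Thm:Uniqueness}. Your extra checks fill in details the paper leaves implicit: that $n=n'$, that $\lambda\Or$ is still a dPD orientation, and that the transported cohomology pairings stay perfect.
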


\begin{proof}
Let $f_*\colon\Hh(V)\to\Hh(V^\prime)$ be the isomorphism induced by the
zig-zag of DGA quasi-isomorphisms. Replace $\Or$ with the nonzero scalar
multiple whose induced
orientation is $\Or_*^\prime\circ f_*$. Orient the intermediate
cohomology algebras along the zig-zag and apply Theorem~\ref{Thm:Uniqueness}.
\end{proof}

We proved the result for odd~$n$ in~\cite[Theorem~1.5 and
Proposition~5.6]{HodgePaper}. To construct the common dPD target,
we first form the base-changed relative minimal Sullivan model of
multiplication $D=V\otimes V^\prime\otimes\Lambda(sU)$, where
$\Lambda U$ is a common minimal Sullivan model of finite type of $V$
and $V^\prime$, as in Lambrechts and Stanley~\cite{Lambrechts2007}.
Then $D$ is a connected oriented PDGA of finite type receiving
quasi-isomorphisms from both algebras.
We then extend $D$ to an
algebra of Hodge type and take its nondegenerate quotient. Although
the original algebras are 1-connected, $D$ may contain degree-one
elements. For odd~$n$, our connected extension theorem applies
directly; for even $n=2m$, it first requires a Hodge decomposition
in degree~$m$.

The new ingredient is Proposition~\ref{Prop:Extension}, which supplies
this middle-degree extension. For a connected oriented PDGA $V$ of
degree $n=2m$ of finite type, it assumes a pre-Hodge
decomposition $V=\HH\oplus\im\Dd\oplus C$ satisfying
\[
                         V^1\HH^m\subset C^{m+1}
\]
and constructs a connected extension that is Hodge in the middle degree
by adjoining exact partners dual to~$C^m$. In the proof of the theorem we
verify this condition for~$D$.
Remark~\ref{Rem:ConnectedExtension} explains how the same construction
also simplifies the proof of our earlier connected extension theorem for odd $n$.

Our counterexamples delimit both the theorem and this construction.

Example~\ref{Ex:ConnectedUniqueness} gives, for every $n\ge3$, weakly
homotopy equivalent connected dPD algebras with no common connected dPD
target; hence 1-connectedness cannot be dropped. This is analogous
to~\cite[Example~6.7]{HodgePaper}, which demonstrates that 2-connectedness
cannot be dropped if one requires the target to be 1-connected.

When $-1$ is not a square in~$\K$, Example~\ref{Ex:Connected}
constructs, in every positive even degree, a connected oriented PDGA
of finite type with no connected finite-type quasi-isomorphic
extension of Hodge type.

Finally, Example~\ref{Ex:MinimalSullivan} constructs for $n=58$ a
4-connected minimal Sullivan PDGA that is not of Hodge type: it admits
no Hodge decomposition for any lift of its cohomology orientation.
Over $\mathbb Q$, it is realized by a simply connected closed smooth
manifold. Thus passing to a minimal Sullivan model does not by itself
provide a Hodge decomposition and hence an induced dPD model via the
nondegenerate quotient.

\medskip
\noindent\textit{Acknowledgments:}
This paper benefited substantially from the assistance of OpenAI's GPT-6 Astra. It would not have been possible without the support of my family, Inessa and Markus H\'ajek, who gave me the time to complete it.
Some underlying ideas and drafts were developed during my postdoctoral appointment at the University of Hamburg. The manuscript was put together during my affiliation with the Wolfram Institute.

\section{Hodge extension in the middle degree}\label{Sec:Extension}

Recall from~\cite[Definition~4.1]{HodgePaper} that a \emph{pre-Hodge
decomposition} of an oriented PDGA is a decomposition
\[
             V=\HH\oplus\im\Dd\oplus C,\qquad C\perp\HH,
\]
where $\HH$ is a complement of $\im\Dd$ in $\ker\Dd$ and $C$ is a
complement of $\ker\Dd$ in~$V$. Such decompositions always exist by
\cite[Lemma~4.3]{HodgePaper}.
It is a \emph{Hodge decomposition} if, in addition, $C\perp C$; see
\cite[Definition~3.6]{HodgePaper}. We say that an oriented PDGA is
\emph{of Hodge type} if it admits a Hodge decomposition.
An extension $V\into\wh V$ of oriented PDGAs is an injective DGA
morphism that preserves the chain-level orientation. A \emph{PDGA
retraction} of this extension is a PDGA morphism
$\pi\colon\wh V\onto V$ such that
$\pi|_V=\Id_V$; see~\cite[Definition~4.10]{HodgePaper}.
Every PDGA retraction is a quasi-isomorphism: its map on cohomology
is surjective because it is a retraction, and injective because it
preserves the perfect pairing; see~\cite[Remark~4.11]{HodgePaper}.

For connected oriented PDGAs of finite type, our connected extension
theorem~\cite[Proposition~4.13]{HodgePaper} requires a Hodge decomposition
in the middle degree when the degree is positive and even.
The following proposition supplies the missing middle-degree extension
under an adaptedness condition.

\begin{proposition}\label{Prop:Extension}
Let $\K$ be a field of characteristic different from~$2$, and let
$(V,\Dd,\Or)$ be a connected oriented PDGA of degree $n=2m\ge4$
of finite type over~$\K$. Suppose that $V$ admits a pre-Hodge
decomposition $V=\HH\oplus\im\Dd\oplus C$ such that
\begin{equation}\label{Eq:Adapted}
                         V^1\HH^m\subset C^{m+1},
\end{equation}
where $V^1\HH^m\coloneqq\Span_\K\{vh\mid v\in V^1,\ h\in\HH^m\}$.
Then $V$ admits a connected extension of Hodge type of finite type
that retracts onto~$V$ as a PDGA.
\end{proposition}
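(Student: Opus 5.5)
The plan is to first build an extension $\wt V\supset V$ that is Hodge in the middle degree $m$, and then to invoke the connected extension theorem \cite[Proposition~4.13]{HodgePaper} on $\wt V$. That theorem needs only a Hodge decomposition in degree $m$ when $n$ is even and positive. Composing the two extensions gives a connected finite-type extension of Hodge type. Composing the two PDGA retractions gives a PDGA retraction onto $V$.

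\textbf{Construction of $\wt V$.} In degree $m$ the only obstruction to Hodge type is the pairing $\la\cdot,\cdot\ra|_{C^m\times C^m}$, a symmetric or antisymmetric form $q$ according to the parity of $m$. Choose a basis $c_1,\dots,c_k$ of $C^m$, which is finite-dimensional by finite type. Following the abstract (``adjoining exact partners dual to $C^m$''), set
\[
\wt V=V\oplus\Span\{y_i\}\oplus\Span\{x_i\},\qquad |y_i|=m-1,\ |x_i|=m,\ \Dd y_i=x_i .
\]
Define the products by declaring $x_i,y_i$ to multiply to zero with everything of positive degree in $V\oplus\Span\{x,y\}$, except for a prescribed pairing. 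Concretely, put $y_i\cdot V^{m+1}$ into $V^n$ via a functional, and $x_i\cdot V^m$ into $V^n$. The new classes must pair against the $\Dd$-part so that $\Or\circ\Dd=0$ holds. So I would define $x_i\cdot v=\Or$-dual data chosen such that $\la x_i,c_j\ra=-\tfrac12 q(c_i,c_j)$, compatibly with $\la y_i,\Dd c_j\ra$. Then modify the complement to $\wt C^m=\Span\{c_i+x_i\}$ (or $c_i+\tfrac12\sum$ corrections). This makes $\wt C^m\perp\wt C^m$, using $\operatorname{char}\K\ne2$. One must still make $\wt V$ a CDGA: products landing in degrees other than $n$ would have to be declared zero, or realized via a square-zero extension by a dual module. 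The cleanest route is to take $\wt V=V\oplus M$ with $M$ a square-zero dg $V$-module, namely a piece of the shifted dual $sV^\vee$ generated by $c_i^\vee$ in degree $m$ and its preimage under $\Dd$. Then the $V$-action is given by $v\cdot\phi=\phi(v\,\cdot)$, and $\Or$ is extended by zero.

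\textbf{Where \eqref{Eq:Adapted} enters.} Because $V$ is only connected, $V^1$ acts nontrivially on $M$. The submodule generated by the $c_i^\vee$ then spills into degree $m+1$, containing functionals $\phi(v\,\cdot)$ for $v\in V^1$. This could destroy perfectness on cohomology or the orthogonality $\wt C\perp\wt\HH$. The condition $V^1\HH^m\subset C^{m+1}$ is exactly what guarantees that these new elements pair trivially with $\HH$. Equivalently, the functionals dual to $C^m$ restricted along multiplication by $V^1$ vanish on $\HH^{m+1}$-relevant parts, so $\HH$ remains a harmonic complement. It also ensures that $M$ is acyclic, so that $V\into\wt V$ is a quasi-isomorphism, and that the obvious projection $\pi\colon\wt V\to V$ killing $M$ is a DGA map preserving orientation. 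That projection is then a PDGA retraction.

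\textbf{Main obstacle.} The hard step is choosing $M$ to be simultaneously a dg-submodule closed under the $V^1$-action, acyclic, and finite type, with $\wt\HH=\HH$ and $\wt C\supset$ the corrected $C^m$ forming a pre-Hodge decomposition with $\wt C^m\perp\wt C^m$. I would first try $M=\Span\{\phi_i,\Dd$-preimages$\}$ generated as a $V$-module, and verify orthogonality degree by degree using \eqref{Eq:Adapted} together with $\Or\circ\Dd=0$.
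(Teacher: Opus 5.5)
Your outer reduction (make $V$ Hodge in degree $m$, then apply \cite[Proposition~4.13]{HodgePaper}) matches the paper. So does the idea of exact partners dual to $C^m$ with a $-\frac12$ correction. The gap is that the extension itself is never built, and the route you single out cannot work.

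\textbf{The dual-module extension fails.} Since $M^n\subset(V^0)^*$, any orientation on $M$ is a multiple of $\phi\mapsto\phi(1)$, and both cases fail.
\begin{itemize}
\item If the multiple is zero, as you propose (``$\Or$ is extended by zero''), then $\la M,V\ra=0$. So $c_i+x_i$ has the same self-pairing as $c_i$, and nothing is corrected.
\item If it is nonzero, then whenever $\la x_i,c_j\ra\ne0$ the product $c_jx_i$ is a nonzero multiple of $1^*\in(V^0)^*$. This element is closed, because the shifted dual vanishes above degree $n$. It is not exact, because the dual differential $(V^1)^*\to(V^0)^*$ vanishes since $\Dd1=0$. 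Hence $M$ is not acyclic, $V\into V\oplus M$ is not a quasi-isomorphism, and $\Hh^n$ grows.
\end{itemize}
More generally, in a square-zero extension by an acyclic $M$, every closed element of $M^n$ is exact and so has orientation zero. The product $x_ic_j$ must therefore be non-closed, which requires $M^{n+1}\ne0$.

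\textbf{How the paper builds it.} The paper uses the free extension $\wh V=\Lambda(x_\alpha,\Dd x_\alpha\mid\alpha\in I)\otimes V$ with $\deg x_\alpha=m-1$; its $x_\alpha$ is your $y_i$. This is quasi-isomorphic to $V$ by K\"unneth, and carries the orientation \eqref{Eq:Orientation}. Then $\la\Dd x_\alpha,c_\beta\ra=\delta_{\alpha\beta}$, while $\wh\Dd(\Dd x_\alpha\,c_\beta)=(-1)^m\Dd x_\alpha\,\Dd c_\beta\ne0$. Your square-zero idea survives if you replace the submodule of the dual by the free dg module $\Span\{y_i,x_i\}\otimes V$, keeping the same orientation formulas. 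That module is acyclic over any field, which would even bypass the paper's separate treatment of $m=2$ and of odd $m$ in characteristic $p$.

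\textbf{Where \eqref{Eq:Adapted} really enters.} You place it in the wrong degree. Because $V^1\ne0$ in general, the extension acquires new non-closed elements in degree $m$, not $m+1$. In the paper's notation these form $Z=X\cdot V^1$ (in yours, $V^1\cdot y_i$), since $\wh\Dd(x_\alpha v)$ has the nonzero component $\Dd x_\alpha\,v$. Consequently your $\Span\{c_i+x_i\}$ is not a complement of the closed elements in degree $m$; one needs $\wh C^m=C^m\oplus Z$. Condition \eqref{Eq:Adapted} is used exactly to show $Z\perp\HH^m$. For $v\in V^1$ and $h\in\HH^m$ one has $vh\in C^{m+1}$, which has no $\Dd C$-component, so $\wh\Or(x_\alpha\,vh)=0$ by \eqref{Eq:Orientation}. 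The condition plays no role in acyclicity or in the projection being a PDGA retraction, contrary to what you claim.

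\textbf{The correction must also handle $Z$.} It has to cancel the cross pairings $\la z,c\ra$ for $z\in Z$ and $c\in C^m$, not only the form on $C^m$. This is why the paper normalizes the partners to be dual, $\la\Dd x_\alpha,c_\beta\ra=\delta_{\alpha\beta}$, rather than $-\frac12\la c_\alpha,c_\beta\ra$, which may be degenerate. It then takes $\mu(c+z)=-\sum_\alpha\la\frac12c+z,c_\alpha\ra\Dd x_\alpha$, using $Z\oplus\Dd X\perp Z\oplus\Dd X$.
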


\begin{proof}
By~\cite[Proposition~4.13]{HodgePaper}, it suffices to construct a
connected extension $V\into\wh V$ of finite type that retracts
onto~$V$ by a PDGA morphism $\pi$ and admits a Hodge decomposition in
degree~$m$. Indeed, applying that proposition to $\wh V$ then gives a
connected extension of Hodge type, and the extensions and retractions compose.
We adjoin exact elements dual to $C^m$ and use them to make the
middle-degree coexact space perpendicular to itself.
We first treat $m\ge3$, assuming that either
$\operatorname{char}\K=0$ or $m$ is even.

Let $V=\HH\oplus\im\Dd\oplus C$ be the pre-Hodge decomposition from
the assumption, so that $V^1\HH^m\subset C^{m+1}$.
Choose a basis $c_\alpha$ ($\alpha\in I$) of $C^m$ and let
$\phi_\alpha\colon C\to\K$ be the coordinate functionals, extended
by zero outside degree~$m$. Consider the free CDGA
\[
 \Lambda\coloneqq\Lambda_\K(x_\alpha,\Dd x_\alpha\mid\alpha\in I),
 \qquad \deg x_\alpha=m-1,
\]
and set
\[
 (\wh V,\wh\Dd)\coloneqq(\Lambda,\Dd)\otimes(V,\Dd).
\]
Since $\Hh(\Lambda)=\K$ by~\cite[Lemma~4.7]{HodgePaper},
K\"unneth's theorem gives quasi-isomorphisms
\[
                         V\into\wh V\xrightarrow{\pi}V,
\]
where $\pi$ is induced by the augmentation of~$\Lambda$.
Moreover, $\wh V$ is connected and of finite type because $I$ is
finite and the new generators have positive degrees.

To equip $\wh V$ with an orientation extending~$\Or$, set
$\wh\Or|_V=\Or$ and, for $v=h+\Dd c+c^\prime$ with $h\in\HH$
and $c,c^\prime\in C$, define
\begin{equation}\label{Eq:Orientation}
 \wh\Or(x_\alpha v)\coloneqq(-1)^m\phi_\alpha(c),
 \qquad
 \wh\Or(\Dd x_\alpha v)\coloneqq\phi_\alpha(c^\prime).
\end{equation}
Set $\wh\Or=0$ on monomials containing at least two new generators.
This defines a linear map of degree~$-n$. Since $\Dd v=\Dd c^\prime$,
we have
\[
 \begin{aligned}
  \wh\Or\bigl(\wh\Dd(x_\alpha v)\bigr)
   &=\phi_\alpha(c^\prime)
      +(-1)^{m-1}(-1)^m\phi_\alpha(c^\prime)=0,\\
  \wh\Or\bigl(\wh\Dd(\Dd x_\alpha v)\bigr)
   &=(-1)^m\wh\Or(\Dd x_\alpha\Dd c^\prime)=0.
 \end{aligned}
\]
The differential preserves the number of new generators in a
monomial, so $\wh\Or\circ\wh\Dd=0$ on the remaining monomials as well.
Thus $\wh\Or$ is an orientation extending~$\Or$, and $\pi$ is a
PDGA retraction.

It remains to construct a Hodge decomposition of $\wh V$ in
degree~$m$. Set
\[
 X\coloneqq\Span_\K\{x_\alpha\mid\alpha\in I\},
 \qquad Z\coloneqq X\cdot V^1.
\]
Since $2(m-1)>m$, we have
\[
 \wh V^{m-1}=V^{m-1}\oplus X,\qquad
 \wh V^m=V^m\oplus\Dd X\oplus Z.
\]
We define $\wh C^m\coloneqq C^m\oplus Z$. Then
\begin{equation}\label{Eq:Middle}
 \wh V^m
   =\HH^m\oplus
       \underbrace{\bigl((\im\Dd)^m\oplus\Dd X\bigr)}_{(\im\wh\Dd)^m}
       \oplus\underbrace{\bigl(C^m\oplus Z\bigr)}_{\wh C^m}.
\end{equation}
To see that $\wh C^m$ is a complement of $(\ker\wh\Dd)^m$, consider
$c\in C^m$ and $v_\alpha\in V^1$,
\[
 \wh\Dd\left(c+\sum_\alpha x_\alpha v_\alpha\right)
 =\Dd c+\sum_\alpha\Dd x_\alpha v_\alpha
       +(-1)^{m-1}\sum_\alpha x_\alpha\Dd v_\alpha.
\]
If this differential vanishes, its component in $\Dd X\cdot V^1$
gives $v_\alpha=0$ for every~$\alpha$, and then $\Dd c=0$ gives
$c=0$. Since the other direct summands in~\eqref{Eq:Middle} are
contained in $(\ker\wh\Dd)^m$, it follows that $\wh C^m$ is a
complement of $(\ker\wh\Dd)^m$.
To see that $\wh C^m \perp\HH^m$, let
$c\in C^m$,
$v_\alpha\in V^1$, and $h\in\HH^m$. We have
\[
 \left\la c+\sum_\alpha x_\alpha v_\alpha,h\right\ra_{\wh\Or}
 =\la c,h\ra_\Or
      +\sum_\alpha\wh\Or\bigl(x_\alpha(v_\alpha h)\bigr)=0,
\]
because $C\perp\HH$ and $v_\alpha h\in C^{m+1}$ by~\eqref{Eq:Adapted}.
Thus the adaptedness condition ensures that the additional coexact
elements remain perpendicular to~$\HH^m$.
In the other degrees, choose complements
of $\im\wh\Dd$ in $\HH^\perp$ and denote their direct sum with
$\wh C^m$ by~$\wh C$.
We obtain a pre-Hodge decomposition $\wh V=\HH\oplus\im\wh\Dd\oplus\wh C$.

Write $\la-,-\ra$ for the pairing induced by~$\wh\Or$.
We seek a linear map $\mu\colon\wh C^m\to(\im\wh\Dd)^m$ satisfying
\[
 \la\mu(u),u^\prime\ra+\la u,\mu(u^\prime)\ra+\la u,u^\prime\ra=0
 \qquad(u,u^\prime\in\wh C^m).
\]
This is the middle-degree \emph{Hodge-twist condition}
of~\cite[Definition~4.4]{HodgePaper}. It ensures that, after extending
$\mu$ by zero outside degree~$m$, the space
\[
 \wh C_\mu\coloneqq\operatorname{graph}(\mu)
   =\left(\bigoplus_{i\ne m}\wh C^i\right)
      \oplus\{u+\mu(u)\mid u\in\wh C^m\}
\]
gives a pre-Hodge decomposition
$\wh V=\HH\oplus\im\wh\Dd\oplus\wh C_\mu$
that is Hodge in degree~$m$.
Indeed, as in the proof of~\cite[Lemma~4.5]{HodgePaper}, adding exact
elements preserves the complement to $\ker\wh\Dd$ and its
orthogonality to~$\HH$. Moreover, exact elements are perpendicular
to closed elements, so $\la\mu(u),\mu(u^\prime)\ra=0$.
The displayed condition therefore says precisely that
$\wh C_\mu^m\perp\wh C_\mu^m$.

To construct $\mu$, recall that the definition of $\wh\Or$ gives
\[
 \la\Dd x_\alpha,c_\beta\ra=\delta_{\alpha\beta},
 \qquad
 Z\oplus\Dd X\perp Z\oplus\Dd X.
\]
Define $\mu\colon\wh C^m\to\Dd X$ by
\begin{equation}\label{Eq:Correction}
 \begin{aligned}
 \mu(c+z)&\coloneqq-\sum_{\alpha\in I}
       \left\la\frac12c+z,c_\alpha\right\ra\Dd x_\alpha\\
 &=-\sum_{\alpha\in I}
       \left(\frac12\la c,c_\alpha\ra+\la z,c_\alpha\ra\right)
                 \Dd x_\alpha
 \qquad(c\in C^m,\ z\in Z).
 \end{aligned}
\end{equation}
It remains to check the middle-degree Hodge-twist condition.
For $u=c+z$ and $u^\prime=c^\prime+z^\prime$ in $\wh C^m$, write
\[
 c=\sum_\alpha a_\alpha c_\alpha,\qquad
 c^\prime=\sum_\alpha a^\prime_\alpha c_\alpha.
\]
Using $\la\Dd x_\alpha,c_\beta\ra=\delta_{\alpha\beta}$ and
$\Dd X\perp Z$, we obtain
\[
 \begin{aligned}
 \la\mu(u),u^\prime\ra
   &=-\sum_\alpha
       \left(\frac12\la c,c_\alpha\ra+\la z,c_\alpha\ra\right)
       a^\prime_\alpha\\
   &=-\frac12\la c,c^\prime\ra-\la z,c^\prime\ra,\\
 \la u,\mu(u^\prime)\ra
   &=-(-1)^m\sum_\alpha a_\alpha
       \left(\frac12\la c^\prime,c_\alpha\ra+\la z^\prime,c_\alpha\ra\right)\\
   &=-\frac12\la c,c^\prime\ra-\la c,z^\prime\ra,\\
 \la u,u^\prime\ra
   &=\la c,c^\prime\ra+\la c,z^\prime\ra+\la z,c^\prime\ra.
 \end{aligned}
\]
Here the second identity follows from graded symmetry and the first,
and the third uses $Z\perp Z$. Their sum is zero, so $\mu$ satisfies
the middle-degree Hodge-twist condition. Thus
$\wh V=\HH\oplus\im\wh\Dd\oplus\wh C_\mu$ is a pre-Hodge
decomposition that is Hodge in degree~$m$.

For $m=2$, the algebra $\Lambda$ is acyclic over every field of
characteristic different from~$2$, and the same orientation makes
$\wh V=\Lambda\otimes V$ a connected extension of finite type that
retracts onto~$V$. Compared with the preceding calculation, $\wh V^2$
has the extra summand
\[
 W\coloneqq\Span_\K\{x_\alpha x_\beta\mid\alpha<\beta\},
\]
where we have chosen an ordering of~$I$. Set
\[
 Z\coloneqq(X\cdot V^1)\oplus W,\qquad
 \wh C^2\coloneqq C^2\oplus Z.
\]
For $\alpha<\beta$,
\[
 \wh\Dd(x_\alpha x_\beta)=\Dd x_\alpha x_\beta-x_\alpha\Dd x_\beta,
\]
and these images are linearly independent. They contain two new
generators and hence cannot cancel $\Dd(C^2)$ or
$\wh\Dd(X\cdot V^1)$, so~\eqref{Eq:Middle} remains valid. Moreover,
$W\perp\wh V$ by the definition of $\wh\Or$. Thus the preceding
construction applies with
$\mu|_W=0$ and gives a Hodge decomposition in degree~$2$.

It remains to treat odd $m\ge3$ when $\operatorname{char}\K=p>2$.
Here $\deg x_\alpha=m-1$ is even, so $\Lambda$ is not in general
acyclic: the classes of $x_\alpha^p$ and
$x_\alpha^{p-1}\Dd x_\alpha$ survive by~
\cite[Lemma~4.7(a)]{HodgePaper}. We therefore replace $\Lambda$ by the
acyclic extension $\mathcal A$ from~\cite[Definition~4.6]{HodgePaper}.
For each $\alpha$, it has additional generators
$x_{\alpha,i},y_{\alpha,i}$ ($i\ge1$), where $x_{\alpha,0}=x_\alpha$ and
\[
 \begin{aligned}
  \deg x_{\alpha,i}&=p^i(m-1),
  &\Dd x_{\alpha,i}&=x_{\alpha,i-1}^{p-1}\Dd x_{\alpha,i-1},\\
  \deg y_{\alpha,i}&=p^i(m-1)-1,
  &\Dd y_{\alpha,i}&=x_{\alpha,i-1}^{p}.
 \end{aligned}
\]
By~\cite[Lemma~4.7(b)]{HodgePaper}, $\Hh(\mathcal A)=\K$. The added
generator degrees tend to infinity, so
$\wh V\coloneqq\mathcal A\otimes V$ is again connected and of finite
type, and its augmentation is a quasi-isomorphism onto~$V$.
Extend~\eqref{Eq:Orientation} by zero on monomials containing an
additional generator. This remains an orientation because their
differentials contain either another additional generator or at least
$p\ge3$ original generators. Finally, every additional generator has
degree at least $p(m-1)-1>m$, so the preceding construction is unchanged
in degrees $m-1$ and~$m$ and again gives a Hodge decomposition in
degree~$m$.
\end{proof}

\begin{remark}[Simplifying the connected extension theorem]
\label{Rem:ConnectedExtension}
The same method simplifies the proof of the connected extension
theorem~\cite[Proposition~4.13]{HodgePaper}, where~$V$ is assumed to
be of finite type. The main complication in that proof came from
adjoining acyclic pairs only for a basis of the nondegenerate quotient
$\QQ^k(C)$. The extension can create new classes in $\QQ^k(\wh C)$,
requiring further extensions at the same degree.
Here we avoid this complication by using the full coexact space:
for $k>n/2$, choose a finite basis $c_\alpha$ of
$C^{n-k}$ and adjoin acyclic pairs as above with
$\deg x_\alpha=k-1$ and
$\la\Dd x_\alpha,c_\beta\ra=\delta_{\alpha\beta}$. The correction
\[
 \mu(u)\coloneqq-\sum_\alpha\la u,c_\alpha\ra\Dd x_\alpha
 \qquad(u\in\wh C^k)
\]
gives
\[
 \la u+\mu(u),c_\beta\ra
 =\la u,c_\beta\ra
   -\sum_\alpha\la u,c_\alpha\ra\delta_{\alpha\beta}=0.
\]
This is the Hodge-twist condition in degree $k>n/2$
from~\cite[Definition~4.4(1)]{HodgePaper}.
The correction therefore replaces the repeated extensions at fixed $k$ and the
dimension-stabilization argument in that proof.

Thus, the main technical extension steps
in~\cite{HodgePaper,Lambrechts2007} reduce to adjunction of acyclic generators and explicit linear algebra.
\end{remark}

\begin{remark}[Obstruction in characteristic two]
\label{Rem:CharacteristicTwo}
In characteristic~$2$, even a cyclic cochain complex with a perfect
pairing need not admit a Hodge decomposition; see
\cite[Example~6.1]{HodgePaper}.  Moreover, in
\cite[Example~6.2]{HodgePaper} we constructed, in every even degree
$n=2m\ge6$, a 1-connected dPD algebra with $\Hh^m=0$ that admits no
quasi-isomorphic extension of Hodge type. Thus the assumption
$\operatorname{char}\K\ne2$ in Proposition~\ref{Prop:Extension} is
genuine even when~\eqref{Eq:Adapted} is vacuous.
For connected oriented PDGAs of finite type, the obstruction to a
Hodge extension is confined to the middle degree: once that degree
admits a Hodge decomposition, \cite[Proposition~4.13]{HodgePaper}
applies over any field.
\end{remark}

\section{Proof of uniqueness in even degree}\label{Sec:Uniqueness}

Let $V,V^\prime$ be as in Theorem~\ref{Thm:Uniqueness}, with
$n=2m\ge6$. Following the proof of~\cite[Proposition~5.5]{HodgePaper},
choose a common minimal Sullivan model $(\Lambda U,\Dd)$ of finite
type, with $U^0=U^1=0$, equipped with an orientation and PDGA
quasi-isomorphisms
\[
             V\xleftarrow{\rho}\Lambda U
                 \xrightarrow{\rho^\prime}V^\prime.
\]
To construct a common target for $V$ and $V^\prime$, consider the
relative minimal Sullivan model of multiplication
\[
 \phi\colon D^\prime=\Lambda U\otimes\Lambda U\otimes\Lambda(sU)
                 \longrightarrow\Lambda U,
\]
where $|su|=|u|-1$. Its base change
\begin{equation}\label{Eq:Relative}
 D\coloneqq(V\otimes V^\prime)
             \otimes_{\Lambda U\otimes\Lambda U}D^\prime
       =V\otimes V^\prime\otimes\Lambda(sU)
\end{equation}
along $\rho\otimes\rho^\prime$ fits in the diagram
\[
\begin{tikzcd}[column sep=large,row sep=large]
 \Lambda U\otimes\Lambda U
   \arrow[hook]{r} \arrow[swap]{d}{\rho\otimes\rho^\prime\,\simeq}
   \arrow[bend left=25]{rr}{\mathrm{mult.}}
 & D^\prime \arrow{r}{\phi\,\simeq}
   \arrow{d}{\rho\otimes\rho^\prime\otimes\Id\,\simeq}
 & \Lambda U\\
 V\otimes V^\prime \arrow[hook]{r} & D.
\end{tikzcd}
\]
The last equality in~\eqref{Eq:Relative} is an equality of graded
algebras; the differential on $D$ is induced from that on $D^\prime$.
In the diagram, the left vertical map is a quasi-isomorphism by
K\"unneth's theorem. The right vertical map is a quasi-isomorphism
because $D^\prime$ is semi-free over $\Lambda U\otimes\Lambda U$
and the square is a pushout. The map $\phi$ is a quasi-isomorphism
by construction of the relative minimal model.
As $\phi$ restricts to the identity on each copy of $\Lambda U$,
the inclusions $V,V^\prime\into D$ are also quasi-isomorphisms.
Since $V,V^\prime,U$ are of finite type and $sU$ is concentrated in
positive degrees, $D$ is connected and of finite type.
As in the proof of~\cite[Proposition~5.5]{HodgePaper}, equip $D$ with
a chain-level orientation $\Or_D$ for which both inclusions preserve
orientation.

To finish the construction, it remains to find a connected Hodge
extension $D\into\wh D$ of finite type. Its nondegenerate quotient
$V^\dprime=\QQ(\wh D)$ is then a connected dPD algebra, and the
quotient map $\pi_\QQ\colon\wh D\onto V^\dprime$ is an
orientation-preserving quasi-isomorphism
by~\cite[Lemma~3.15(a)]{HodgePaper}. The composites
\[
 V\into D\into\wh D\xrightarrow{\pi_\QQ}V^\dprime,
 \qquad
 V^\prime\into D\into\wh D\xrightarrow{\pi_\QQ}V^\dprime
\]
are therefore PDGA quasi-isomorphisms preserving chain-level
orientation and are injective by~\cite[Lemma~3.13]{HodgePaper}.
Proposition~\ref{Prop:Extension} supplies the required Hodge extension
provided that $D$ satisfies the following lemma.

\begin{lemma}\label{Lem:Adapted}
The oriented PDGA $(D,\Dd,\Or_D)$ admits a pre-Hodge decomposition
$D=\HH\oplus\im\Dd\oplus C$ such that
\[
                         D^1\HH^m\subset C^{m+1}.
\]
\end{lemma}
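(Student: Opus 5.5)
We first determine $D^1$, and then choose $\HH^m$ so that $D^1\HH^m$ is automatically coexact. Since $V^1=V^{\prime1}=0$ and $U^0=U^1=0$, the degree-one part of $D=V\otimes V^\prime\otimes\Lambda(sU)$ is
\[
 D^1=sU^2 ,
\]
the suspensions of the degree-two generators. The differential of the relative minimal model of multiplication satisfies $\Dd(su)=u\otimes1-1\otimes u+(\text{decomposables})$. For $u\in U^2$ the decomposable part vanishes for degree reasons, so after base change
\[
 \Dd(su)=\rho(u)-\rho^\prime(u)\in V^2\oplus V^{\prime2}.
\]
Because $\Lambda U$ is minimal with $U^1=0$, we have $\Dd u=0$ for $u\in U^2$. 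The map $\Dd\colon sU^2\to V^2\oplus V^{\prime2}$ is injective, since $\rho(u)$ is closed and represents the nonzero class $[u]$ whenever $u\neq0$. Hence $(\ker\Dd)^1=0$, and $D^1$ is entirely coexact.

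The plan is to choose $\HH^m\subset V^m\otimes1\subset D^m$, that is, harmonic representatives coming from $V$. This is possible because $V\into D$ is a quasi-isomorphism. Set $\HH_V$ to be the harmonic part of a pre-Hodge (indeed Hodge) decomposition of the dPD algebra $V$, and take $\HH^m\coloneqq\HH_V^m$. Then $D^1\HH^m$ is spanned by the elements $su\cdot h$ with $u\in U^2$ and $h\in\HH_V^m$. Their differentials are
\[
 \Dd(su\, h)=(\rho(u)-\rho^\prime(u))\,h,
\]
with $\rho(u)h\in V^{m+2}$ and $\rho^\prime(u)h\in V^{m}\otimes V^{\prime2}$.

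Next we check that $D^1\HH^m\cap\ker\Dd=0$. The component of $\Dd(\sum_i su_i\,h_i)$ in $V^m\otimes V^{\prime2}$ is $\sum_i h_i\otimes\rho^\prime(u_i)$. Take a basis $u_i$ of $U^2$, so that the $\rho^\prime(u_i)$ are linearly independent in $V^{\prime2}$. Then this component vanishes only if all $h_i=0$. Since $sU^2\otimes\HH^m$ embeds in $D$, the space $D^1\HH^m$ injects under $\Dd$.

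We also check orthogonality. The orientation $\Or_D$ is built as in~\cite[Proposition~5.5]{HodgePaper}, and we should verify from that construction that $\Or_D$ vanishes on $sU^2\cdot\HH_V^m\cdot\HH_V^m$, and more generally that $\la D^1\HH^m,\HH\ra=0$. If $\Or_D$ kills monomials with a single $sU$ factor and $V$-factors only, this holds. I expect this to be the main obstacle: it depends on the specific form of $\Or_D$. If it fails, I would instead correct by exact terms. Choose $\HH$ freely in the other degrees, with $\HH^{m}$ as above and $\HH^{m}\perp D^1\HH^m$ arranged by modifying $\HH^m$ by exact elements; this is possible because $\la\cdot,\cdot\ra$ restricted to cohomology representatives pairs perfectly with $\HH^m$.

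Finally, we build $C$. Put $C^{m+1}\coloneqq D^1\HH^m\oplus C_0$, where $C_0$ extends $D^1\HH^m$ to a complement of $(\ker\Dd)^{m+1}$ inside $\HH^\perp$. This extension is possible because $\HH^\perp\cap\ker\Dd=\im\Dd$ by the perfect pairing on cohomology, together with the argument of~\cite[Lemma~4.3]{HodgePaper} applied relative to the given subspace. In the other degrees we take any pre-Hodge complements by~\cite[Lemma~4.3]{HodgePaper}. This gives $D^1\HH^m\subset C^{m+1}$.
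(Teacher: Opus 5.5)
You have the injectivity argument and the final extension step right; both match the paper's proof. These are the $V^m\otimes V^{\prime2}$ component of $\Dd(\sum_i su_i\,h_i)$ together with linear independence of the $\rho^\prime(u_i)$, and extending $D^1\HH^m$ to a complement of $(\im\Dd)^{m+1}$ in $(\HH^\perp)^{m+1}$. The gap is the orthogonality $D^1\HH^m\perp\HH$. You leave it open, and your discussion of it is aimed at the wrong degree. Since $D^1\HH^m\subset D^{m+1}$, only $\HH^{m-1}$ can pair with it. Its pairing with $\HH^m$ lands in degree $2m+1=n+1$ and vanishes trivially. So checking that $\Or_D$ kills $sU^2\cdot\HH^m\cdot\HH^m$ is vacuous, and ``modifying $\HH^m$ by exact elements'' addresses nothing. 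It would also move $\HH^m$ out of $V\otimes1\otimes1$, which changes $D^1\HH^m$ itself and breaks your injectivity argument. Worse, you propose to choose $\HH$ ``freely in the other degrees''. With $\HH^{m-1}$ arbitrary, orthogonality is no longer automatic. Repairing it by corrections $h^\prime+\Dd e$ with $e\in D^{m-2}$ amounts to realizing functionals on $\Dd(D^1\HH^m)\subset D^{m+2}$ by pairing with $D^{m-2}$. That needs chain-level nondegeneracy, which the oriented PDGA $D$ is not known to have.

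The missing idea is that no knowledge of $\Or_D$ is needed. Choose all of $\HH$ inside $V\otimes1\otimes1$, in particular $\HH^{m-1}$. Then for $h\in\HH^m$, $h^\prime\in\HH^{m-1}$ and $u\in U^2$,
\[ \la su\cdot h,h^\prime\ra_D=\Or_D\bigl(su\cdot hh^\prime\bigr)=0, \]
because $hh^\prime\in V^{n-1}$, and chain-level Poincar\'e duality on the 1-connected dPD algebra $V$ gives $V^{n-1}\cong(V^1)^*=0$. All other degrees of $\HH$ are orthogonal to $D^1\HH^m$ for degree reasons, and your last paragraph then finishes the proof. A minor point: the correction terms in $\Dd(su)$ do not vanish ``for degree reasons'', since $\Lambda^2(sU^2)$ lies in degree $2$. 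They vanish because $\Dd u=0$ in the explicit formula for the differential.
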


\begin{proof}
Choose a space of representatives
$\HH\subset V\otimes1\otimes1\subset D$ of
$\Hh(D)\simeq\Hh(V)$ and set
\[
                              Z\coloneqq D^1\HH^m.
\]
To choose $C^{m+1}$ containing $Z$, it suffices to show that
$Z\subset\HH^\perp$ and $Z\cap\ker\Dd_D=0$.
Indeed, since the pairing on $\HH$ is perfect, we have
$D=\HH\oplus\HH^\perp$ and
$\im\Dd_D=\HH^\perp\cap\ker\Dd_D$.
Choose a basis $u_\alpha$ of $U^2$.
Since $D^1=s(U^2)$, every element of $Z$ can be written as
$\sum_\alpha(su_\alpha)h_\alpha$ with $h_\alpha\in\HH^m$.
Let $\la-,-\ra_D$ denote the pairing induced by~$\Or_D$.
For $h^\prime\in\HH^{m-1}$, we have
\[
 \begin{aligned}
 \left\la\sum_\alpha(su_\alpha)h_\alpha,h^\prime\right\ra_D
 &=\sum_\alpha\la(su_\alpha)h_\alpha,h^\prime\ra_D\\
 &=\sum_\alpha\la su_\alpha,h_\alpha h^\prime\ra_D=0,
 \end{aligned}
\]
since $h_\alpha h^\prime\in V^{n-1}=0$ by 1-connectedness and
chain-level Poincar\'e duality on~$V$. All other degrees of $\HH$
are orthogonal to $Z$ for degree reasons, so $Z\subset\HH^\perp$.

To prove injectivity of $\Dd_D$ on $Z$, we use the explicit
differential on $D^\prime$ from~\cite[Section~2, Proposition~3]{Gatsinzi2016}:
\begin{equation}\label{Eq:MultiplicationDifferential}
 \begin{aligned}
 \Dd_{D^\prime}(v\otimes1\otimes1)
   &=\Dd v\otimes1\otimes1,\\
 \Dd_{D^\prime}(1\otimes v\otimes1)
   &=1\otimes\Dd v\otimes1,\\
 \Dd_{D^\prime}(1\otimes1\otimes su)
   &=u\otimes1\otimes1-1\otimes u\otimes1\\
   &\quad+\sum_{j\ge1}\frac{(-1)^j}{j!}
              (S\Dd_{D^\prime})^j(u\otimes1\otimes1),
 \end{aligned}
\end{equation}
where $v\in\Lambda U$, $u\in U$, and $S$ is the derivation of
degree $-1$ determined by
\[
 \begin{aligned}
 S(u\otimes1\otimes1)&=1\otimes1\otimes su,\\
 S(1\otimes u\otimes1)&=1\otimes1\otimes su,\\
 S(1\otimes1\otimes su)&=0.
 \end{aligned}
\]
For the degree-two generators
$u_\alpha$, minimality gives $\Dd u_\alpha=0$, so the sum vanishes.
The differential on these generators and its base change are therefore
\begin{equation}\label{Eq:ClosedGenerator}
 \begin{aligned}
 \Dd_{D^\prime}(1\otimes1\otimes su_\alpha)
   &=u_\alpha\otimes1\otimes1-1\otimes u_\alpha\otimes1,\\
 \Dd_D(1\otimes1\otimes su_\alpha)
   &=\rho(u_\alpha)\otimes1\otimes1
                   -1\otimes\rho^\prime(u_\alpha)\otimes1.
 \end{aligned}
\end{equation}
Since the representatives $h_\alpha$ are closed, it follows that
\[
 \begin{aligned}
 \Dd_D\Bigl(\sum_\alpha(su_\alpha)h_\alpha\Bigr)
 &=\sum_\alpha\Dd_D(su_\alpha)h_\alpha\\
 &=\sum_\alpha\bigl(\rho(u_\alpha)\otimes1\otimes1
             -1\otimes\rho^\prime(u_\alpha)\otimes1\bigr)
                  (h_\alpha\otimes1\otimes1)\\
 &=\sum_\alpha\rho(u_\alpha)h_\alpha\otimes1\otimes1
             -\sum_\alpha h_\alpha\otimes\rho^\prime(u_\alpha)\otimes1.
 \end{aligned}
\]
If this differential vanishes, its component in
$V^m\otimes V^{\prime2}\otimes1$ gives
$\sum_\alpha h_\alpha\otimes\rho^\prime(u_\alpha)=0$.
Since $U^2\simeq\Hh^2(\Lambda U)$ and $\rho^\prime$ is a
quasi-isomorphism, the elements $\rho^\prime(u_\alpha)$ are linearly
independent. Hence every $h_\alpha$ is zero, so $\Dd_D$ is injective on~$Z$.

We can thus extend $Z$ to a complement $C^{m+1}$ of
$(\im\Dd_D)^{m+1}$ in $(\HH^\perp)^{m+1}$. In every other degree $k$,
choose $C^k$ to be any complement of $(\im\Dd_D)^k$ in $(\HH^\perp)^k$.
This proves the lemma.
\end{proof}

This completes the proof of Theorem~\ref{Thm:Uniqueness} for $n=2m\ge6$.
For $n=0,2,4$, 1-connectedness and chain-level Poincar\'e duality give
\[
 \begin{array}{ll}
 n=0:&V=V^0=\K,\\
 n=2:&V=V^0\oplus V^2,\\
 n=4:&V=V^0\oplus V^2\oplus V^4,
       \quad V^3\simeq(V^1)^*=0.
 \end{array}
\]
Thus $\Dd=0$, and likewise $\Dd^\prime=0$. The zig-zag induces an
oriented algebra isomorphism $f\colon V\to V^\prime$, so we can take
$V^\dprime=V^\prime$, $\iota=f$, and $\iota^\prime=\Id$.
This completes the proof in all dimensions.

\begin{remark}[Characteristic zero]\label{Rem:Characteristic}
The assumption $\operatorname{char}\K=0$ in
Theorem~\ref{Thm:Uniqueness} is used for the Sullivan-model results.
As discussed in~\cite[Remark~5.8]{HodgePaper}, we lack references for
these results in positive characteristic. If suitable analogues were
available, the proof would work over every field of characteristic
different from~$2$. The remaining characteristic-two obstruction is the
middle-degree one explained in Remark~\ref{Rem:CharacteristicTwo}.
\end{remark}

\section{Counterexamples}\label{Sec:Examples}

\begin{example}[Connected dPD algebras with no common connected target]
\label{Ex:ConnectedUniqueness}
Over any field $\K$ and for every $n\ge3$, we construct weakly homotopy
equivalent connected dPD algebras $V,V^\prime$ with no common connected target.

For the first algebra, let
\[
V\coloneqq\Lambda_\K(a_1,b_1,\alpha_{n-1},\beta_{n-1})/I,
\qquad
\Dd=0,
\]
where $I$ is generated by $a^2$, $b^2$, $ab$, $a\beta$, $b\alpha$, $a\alpha-b\beta$,
and all elements of degree~$>n$. Setting $w\coloneqq a\alpha=b\beta$
and $\Or(w)=1$, we have
\[
V=\Span_\K\{1,a,b,\alpha,\beta,w\}.
\]
We have $a\alpha=b\beta=w$, so $V$ is a connected dPD algebra.

For the second algebra, let
\[
\begin{aligned}
V^\prime&\coloneqq\Lambda_\K(a_1,b_1,c_1,\eta_{n-2},\gamma_{n-1})/I^\prime,\\
\Dd c&=ab,\qquad \Dd\eta=\gamma,\qquad \Dd a=\Dd b=\Dd\gamma=0,
\end{aligned}
\]
where $I^\prime$ is the differential ideal generated by $a^2$, $b^2$,
$c^2$, $ac$, $bc$, $c\eta$, $\eta^2$, $a\gamma$,
$b\gamma$, $\eta\gamma$, and all elements of degree~$>n$. Setting
$\alpha\coloneqq b\eta$, $\beta\coloneqq-a\eta$,
$w\coloneqq ab\eta=c\gamma$, and $\Or(w)=1$, we have
\[
V^\prime=\Span_\K\{1,a,b,c,ab,\eta,\alpha,\beta,\gamma,w\},
\]
with $\Dd c=ab$, $\Dd\eta=\gamma$, and all other basis elements
closed. We have
\[
 a\alpha=w,\qquad b\beta=w,\qquad
 c\gamma=c\Dd\eta=(\Dd c)\eta-\Dd(c\eta)=ab\eta=w,
 \qquad (ab)\eta=w,
\]
so $V^\prime$ is a connected dPD algebra. 

To show that the algebras are weakly homotopy equivalent, consider
\[
E\coloneqq\Span_\K\{1,a,b,c,ab,\alpha,\beta,w\}\subset V^\prime.
\]
This is a sub-CDGA, and the projection $\pi\colon E\to V$ sending $c$ and $ab$ to zero and fixing the remaining basis elements is a DGA morphism. Its kernel is the acyclic pair $\Dd c=ab$, and the cokernel of $E\hookrightarrow V^\prime$ is the acyclic pair $\Dd\eta=\gamma$, so we have the zig-zag of DGA quasi-isomorphisms
\[
V\xleftarrow{\pi}E\hookrightarrow V^\prime.
\]
Setting $\Or_E(w)=1$ yields a zig-zag of PDGA quasi-isomorphisms.

It remains to rule out a common connected target. Suppose that there are DGA quasi-isomorphisms
\[
f\colon V\to T,
\qquad
g\colon V^\prime\to T,
\]
with $T$ connected. Since $(\im\Dd_T)^1=0$ and $f_*$ is an
isomorphism in degree~$1$, the elements $f(a),f(b)$ form a basis of
$(\ker\Dd_T)^1$. Their pairwise products vanish, since
$a^2=b^2=ab=0$ in $V$. Hence every product of closed degree-one
elements of $T$ vanishes. In particular,
\[
 g(w)=g(a)g(b)g(\eta)=0.
\]
This contradicts the fact that $g_*$ is an isomorphism in degree~$n$.
\end{example}

\begin{example}[No connected finite-type quasi-isomorphic extension of Hodge type]\label{Ex:Connected}
Let $\operatorname{char}\K\ne2$ and suppose that $-1$ is not a
square in $\K$. For every positive even degree $n=2m$, we construct a connected
oriented PDGA $V$ of finite type with no connected quasi-isomorphic
extension of Hodge type of finite type.

First let $n=2$. Set
\[
 \begin{aligned}
 V&\coloneqq\Lambda_\K(x_1,y_1,p_2,q_2)/I,\\
 \Dd x&=p,\qquad \Dd y=q,\qquad \Dd p=\Dd q=0,
 \end{aligned}
\]
where $I$ is generated by all elements of degree $>2$. Set
$\Or(xy)=1$ and $\Or(p)=\Or(q)=0$. Then
\[
 V=\Span_\K\{1,x,y,p,q,xy\},\qquad
 \Hh(V)=\Span_\K\{[1],[xy]\},
\]
so $V$ is a connected oriented PDGA.

Suppose that $V\into\wh V$ is a connected quasi-isomorphic extension of Hodge type. Since
$\Hh^1(\wh V)=0$ and
$(\im\wh\Dd)^1=0$, we have $(\ker\wh\Dd)^1=0$. A Hodge
decomposition therefore has $\wh C^1=\wh V^1$. In particular,
$x,y\in\wh C^1$, whereas $\wh C\perp\wh C$ would give
$\wh\Or(xy)=0$. This contradicts $\wh\Or(xy)=\Or(xy)=1$.

Now let $n=2m\ge4$ and suppose first that $m$ is even. Let
\[
 \begin{aligned}
 V&\coloneqq\Lambda_\K(a_1,b_{2m-1},h_m,c_m)/I,\\
 \Dd c&=ah,\qquad \Dd a=\Dd b=\Dd h=0,
 \end{aligned}
\]
where $I$ is generated by $ac$, $hc$, $h^2-ab$, $c^2+ab$, and all
elements of degree $>2m$. Setting $u\coloneqq ah=\Dd c$,
$w\coloneqq ab$, and $\Or(w)=1$, we have
\[
 V=\Span_\K\{1,a,b,h,c,u,w\},\qquad
 \Hh(V)=\Span_\K\{[1],[a],[b],[h],[w]\}.
\]
Since $[a][b]=[h]^2=[w]$, the orientation gives Poincar\'e duality
on cohomology.

Suppose that $\iota\colon V\into\wt V$ is a connected quasi-isomorphic
extension of Hodge type of finite type, and put
$P\coloneqq\QQ(\wt V)$; by~\cite[Lemma~3.15(a)]{HodgePaper}, the
quotient map is an orientation-preserving quasi-isomorphism, and $P$
is a dPD algebra. Keep the same names $a,b,h,c,w$ for the images under
the composite DGA map $V\to\wt V\to P$, and write $\Or_P$ for its
orientation; then $\Hh^m(P)=\K[h]$
with $\Or_P(h^2)=\Or_P(w)=1$.

Since $\Hh^m(P)=\K[h]$ with $\Or_P(h^2)=1$, and $-1$ is not a square
in $\K$, no closed element of $P^m$ can square to $-w$.
In $V$, the element $c$ does square to $-w$, but it is not closed,
since $\Dd c=ah\ne0$. We will show that its image in $P$ can be
deformed into a closed element of the form $z\coloneqq c+ae$,
where $e\in P^{m-1}$. This will give a contradiction, since
$z^2=c^2=-w$ by $a^2=ca=0$.
We have $\Dd z=ah-a\,\Dd e$, so such $e$ exists provided that
$ah\in a\,\Dd(P^{m-1})$.
For any $y\in P^{m-1}$ perpendicular to $a\,\Dd(P^{m-1})$, we have
$0=\Or_P(a\,\Dd x\,y)=\Or_P\bigl(x\,\Dd(ay)\bigr)$
for every $x\in P^{m-1}$, so $\Dd(ay)=0$ by perfection of the pairing.
Now $ay$ must be exact because $(ay)^2 = 0$ and $\Hh^m(P) = \K[h]$ with $\Or_P(h^2) = 1$.
Therefore $\Or_P(ahy)=\Or_{P*}([h][ay])=0$.
By perfection of the pairing, $ah\in a\,\Dd(P^{m-1})$, as required.

For odd $m\ge3$, every element of degree $m$ squares to zero, so we
replace $h$ and $c$ by pairs $h_1,h_2$ and $c_1,c_2$, respectively,
with $[h_1][h_2]=[w]$. Let
\[
 \begin{aligned}
 V&\coloneqq
 \Lambda_\K(a_1,b_{2m-1},h_{1,m},h_{2,m},c_{1,m},c_{2,m})/I,\\
 \Dd c_j&=ah_j,\qquad \Dd a=\Dd b=\Dd h_j=0
 \quad (j=1,2),
 \end{aligned}
\]
where $I$ is generated by
\[
 ac_j,\quad h_1h_2-ab,\quad c_1c_2,\quad
 h_jc_k-\delta_{jk}ab\qquad(1\le j,k\le2),
\]
and all elements of degree $>2m$. Setting
$u_j\coloneqq ah_j=\Dd c_j$, $w\coloneqq ab$, and $\Or(w)=1$, we
have
\[
 V=\Span_\K\{1,a,b,h_1,h_2,c_1,c_2,u_1,u_2,w\},
\]
and
\[
 \Hh(V)=\Span_\K\{[1],[a],[b],[h_1],[h_2],[w]\}.
\]
The products $[a][b]=[h_1][h_2]=[w]$ give Poincar\'e duality on
cohomology.

Form $P=\QQ(\wt V)$ as above, keeping the same symbols for the
images in $P$. Since $\Hh^m(P)=\Span_\K\{[h_1],[h_2]\}$ has a
perfect pairing, it cannot have a basis whose products all vanish.
We will show that $c_j$ can be deformed into closed elements
$z_j\coloneqq c_j+ae_j$, where $e_j\in P^{m-1}$, whose classes
form such a basis. Indeed, $z_jz_k=c_jc_k=0$ by $a^2=ac_j=0$.
We have $\Dd z_j=ah_j-a\,\Dd e_j$, so such $e_j$ exist provided that
$ah_j\in a\,\Dd(P^{m-1})$.
For any $y\in P^{m-1}$ perpendicular to $a\,\Dd(P^{m-1})$, the same
calculation as above gives $\Dd(ay)=0$.
To show that $ay$ is exact, write
$[ay]=\lambda_1[h_1]+\lambda_2[h_2]$ and choose
$x\in P^{m-1}$ with $\Dd x=\lambda_1h_1+\lambda_2h_2-ay$.
Since $x$ has even degree, $\Dd(ax^2)=-2a\,\Dd x\,x$, so
\[
 0=\Or_P(a\,\Dd x\,x)
  =\sum_j\lambda_j\Or_P(\Dd c_j\,x)
  =\sum_j\lambda_j\Or_P(c_j\,\Dd x)
  =-(\lambda_1^2+\lambda_2^2),
\]
using $\Or_P\circ\Dd=0$, $\operatorname{char}\K\ne2$,
$a^2=ac_j=0$, $\Dd c_j=ah_j$, and $c_jh_k=-\delta_{jk}w$.
Since $-1$ is not a square in~$\K$,
we get $\lambda_1=\lambda_2=0$. Thus $ay$ is exact, and the
pairing $\Or_P(ah_jy)=-\Or_{P*}([h_j][ay])$ vanishes.
By perfection of the pairing, $ah_j\in a\,\Dd(P^{m-1})$,
so the required closed elements $z_j=c_j+ae_j$ exist.

It remains to check that $[z_1],[z_2]$ form a basis.
Since $ah_j=a\,\Dd e_j$ and $h_jc_k=\delta_{jk}w$, we have
\[
 \Or_P(h_jz_k)=\delta_{jk}-T_{jk},\qquad
 T_{jk}\coloneqq\Or_P(a\,\Dd e_j\,e_k).
\]
The elements $e_j$ have even degree, so
$T_{jk}+T_{kj}=-\Or_P\bigl(\Dd(ae_je_k)\bigr)=0$.
Thus the pairing matrix is
\[
 \bigl(\Or_P(h_jz_k)\bigr)_{j,k}
 =\begin{pmatrix}1&-T_{12}\\T_{12}&1\end{pmatrix},
\]
with determinant $1+T_{12}^2\ne0$. This proves that $[z_1],[z_2]$
form a basis, whereas $z_jz_k=0$ for all $j,k$, giving the contradiction.
\end{example}

\begin{example}[A minimal Sullivan PDGA not of Hodge type]
\label{Ex:MinimalSullivan}
Let $\operatorname{char}\K=0$ and let $A$ be the minimal Sullivan algebra
\[
 \begin{gathered}
 A\coloneqq\Lambda_\K(a_5,a_9,a_{15},a_{17},a_{21},a_{23}),\\
 \Dd a_{21}=a_5a_{17},\qquad \Dd a_{23}=a_9a_{15},
 \end{gathered}
\]
with the other generators closed. The element
\[
w\coloneqq a_5a_9a_{21}a_{23}
\]
is closed because $a_5^2=a_9^2=0$, and it is not exact because
$A^{57}=0$. Define $\Or\colon A\to\K$ by $\Or(w)=1$ and $\Or=0$ on
all other monomials. Then $\Or\circ\Dd=0$, again because $A^{57}=0$,
and the induced orientation on cohomology satisfies $\Or_*([w])=1$.

We have
\[
 \begin{gathered}
 A^{28}=\K a_5a_{23},\qquad A^{30}=\K a_9a_{21},\\
 \Dd(a_5a_{23})=-a_5a_9a_{15}\ne0,\qquad
 \Dd(a_9a_{21})=a_5a_9a_{17}\ne0,\qquad
 (a_5a_{23})(a_9a_{21})=w.
 \end{gathered}
\]
Therefore, a Hodge decomposition $A=\HH\oplus\im\Dd\oplus C$ would have $C^{28}=A^{28}$
and $C^{30}=A^{30}$, and $C\perp C$ would imply
$\la a_5a_{23},a_9a_{21}\ra=\Or(w)=0$. This is a contradiction, so $A$ admits no Hodge decomposition. 
The same contradiction holds for every chain-level lift of $\Or_*$,
since $w$ is closed and hence every such lift takes the value $1$ on~$w$.

However, the cohomology of $A$ does not satisfy Poincar\'e duality.
For instance, $A^{58}$ is spanned by the closed monomials $w$ and
$a_5a_{15}a_{17}a_{21}$, and $A^{57}=0$, so
\[
 \Hh^{58}(A)=\K[w]\oplus\K[a_5a_{15}a_{17}a_{21}],\qquad
 \Or_*([a_5a_{15}a_{17}a_{21}])=0.
\]
Poincar\'e duality on cohomology means that the musical isomorphisms
\[
 \flat_k\colon\Hh^k\longrightarrow(\Hh^{58-k})^*,\qquad
 \flat_k([z])\coloneqq\la[z],-\ra,
\]
are bijective for every $0\le k\le58$. Since the
cohomology is of finite type and the pairing is graded symmetric, we
have $\flat_{58-k}=(-1)^k\flat_k^*$, so $\flat_k$ is bijective if
and only if $\flat_{58-k}$ is. Moreover, $\Hh^j(A)=0$ for
$27\le j\le31$, because $A^{27}=0$, $\Dd A^{28}=A^{29}$, and $\Dd A^{30}=A^{31}$.
It therefore suffices to make $\flat_k$
bijective for $32\le k\le 58$ without changing the cohomology in
degrees $27,\ldots,31$. We do this in stages $k=32,\ldots,58$,
adjoining at stage~$k$ generators $b_{k-1,i}$ and $c_{k,j}$ of
degrees $k-1$ and~$k$, respectively. This leaves degrees below~$31$ unchanged, and
in particular the Hodge obstruction in degrees $28$ and~$30$.

At the first stage $k=32$ we have
\[
 \Hh^{26}(A)=\K[a_9a_{17}]\oplus\K[a_5a_{21}],\qquad
 \Hh^{32}(A)=\K[a_9a_{23}]\oplus\K[a_{15}a_{17}].
\]
The only nonzero pairing between these bases is
$\la[a_9a_{23}],[a_5a_{21}]\ra=-1$, so $\flat_{32}$ has kernel
$\K[a_{15}a_{17}]$ and cokernel represented by the functional dual
to $[a_9a_{17}]$. We adjoin
$b_{31}\coloneqq b_{31,1}$ with $\Dd b_{31}=a_{15}a_{17}$ to kill the
kernel, and a closed $c_{32}\coloneqq c_{32,1}$ to fill the cokernel,
extending $\Or$ by $\Or(c_{32}a_9a_{17})=1$ and
$\Or(c_{32}a_5a_{21})=0$. No extension of $\Or$ is needed for
$b_{31}$ because $b_{31}A^{27}=0$. This also accidentally kills the second
class in degree~$58$, because
$\Dd(a_5b_{31}a_{21})=-a_5a_{15}a_{17}a_{21}$.

In general, let $A^{(k-1)}$ be the algebra after stage $k-1$, where
$A^{(31)}\coloneqq A$, and let~$\flat_k$ be its musical isomorphism in
degree~$k$. We choose cocycles $z_i$ representing a basis of
$\ker\flat_k$ and adjoin $b_{k-1,i}$ with $\Dd b_{k-1,i}=z_i$. We
extend $\Or$ to the monomials $b_{k-1,i}A^{59-k}$ such that
\[
 \Or(b_{k-1,i}\Dd x)=(-1)^k\Or(z_ix)\qquad(x\in A^{58-k}).
\]
This is possible because $[z_i]\in\ker\flat_k$ makes the
right-hand side vanish for closed~$x$. This condition is necessary for
$\Or(\Dd(b_{k-1,i}x))=0$. We then choose functionals $f_j$
representing a basis of the cokernel of $\flat_k$ and adjoin closed
$c_{k,j}$. We extend $\Or$ to the monomials $c_{k,j}A^{58-k}$ by
\[
\Or(c_{k,j}x)\coloneqq f_j([x])
\]
for closed~$x$ and by zero on a
complement of the cocycles in~$A^{58-k}$. This vanishes on
boundaries, so $\Or(\Dd(c_{k,j}y))=0$ for $y\in A^{57-k}$. Every new
monomial of degree at most~$58$ contains exactly one new generator,
because two would have degree at least $2(k-1)\ge62$. Hence these
prescriptions extend $\Or$ to $A^{(k)}$ with $\Or\circ\Dd=0$, leaving
it unchanged on~$A^{(k-1)}$.

Since $A^{(k-1)}$ has no elements of degrees $1$ to~$4$, the new
elements of degree at most~$k$ are the linear combinations of the new
generators. Hence the cohomology in degrees below $k-1$ is unchanged,
and so is the cohomology in degree $k-1$: if
$\sum_i\alpha_ib_{k-1,i}+y$ with $y\in A^{(k-1)}$ is closed, then
$\sum_i\alpha_i[z_i]=0$, so all $\alpha_i$ vanish. Together with the
unchanged orientation on $A^{(k-1)}$, this preserves the previously
established bijectivity of $\flat_{k^\prime}$ for $32\le k^\prime<k$. In degree~$k$,
the classes $[z_i]$ become exact, the new cocycles are the linear
combinations of the $c_{k,j}$, and $\flat_k([c_{k,j}])=f_j$, so
$\flat_k$ becomes bijective;
here $\Hh^{58-k}$ is unchanged because $58-k<k-1$. Each stage adds
finitely many generators because $A^{(k-1)}$ is of finite type, and
their differentials are decomposable because $A^{(k-1)}$ has no
generator of degree~$k$. After stage~$58$ we therefore have a minimal
Sullivan algebra of finite type whose cohomology satisfies
Poincar\'e duality in degrees $0,\ldots,58$.

For $k>58$, we continue adjoining generators of degree $k-1$ whose
differentials represent a basis of~$\Hh^k$, extending $\Or$ by zero
on the new monomials. No new element has degree~$57$, so
$\Or\circ\Dd=0$ persists, and the cohomology in degrees below~$k$ is
unchanged as before. We obtain a possibly infinitely generated
oriented minimal Sullivan algebra of finite type which is
$4$-connected and whose cohomology is a Poincar\'e duality algebra of
degree~$58$ with $\Or_*([w])=1$. Its degrees $28$ and $30$ are those
of~$A$, so it is not of Hodge type by the obstruction above.

Finally, over $\mathbb Q$, since $n=58$ is bigger than~$4$ and not
divisible by~$4$, Sullivan's rational surgery
theorem~\cite[Theorem~13.2]{Sullivan1977} realizes this algebra as
the minimal model of a simply connected closed smooth $58$-manifold.
The orientation induced by the fundamental class is a nonzero scalar
multiple of~$\Or_*$, so the same obstruction applies.
\end{example}

\begin{samepage}
\renewcommand*{\bibfont}{\footnotesize}
\printbibliography
\end{samepage}
\end{document}